\newtheorem{thm}{Theorem}[section]
\newtheorem{lem}[thm]{Lemma}
\newtheorem{example}[thm]{Example}
\theoremstyle{definition}
\theoremstyle{remark}
\numberwithin{equation}{section}
\let\emptyset\varnothing
\newcommand{\FF}{\mathbb{F}}
\newcommand{\ZZ}{\mathbb{Z}}
\newcommand{\QQ}{\mathbb{Q}}
\newcommand{\RR}{\mathbb{R}}
\newcommand{\CC}{\mathbb{C}}
\newcommand{\y}{\mathbf{y}}
\renewcommand{\t}{\mathbf{t}}
\renewcommand{\a}{\mathbf{a}}
\renewcommand{\c}{\mathbf{c}}
\newcommand{\x}{\mathbf{x}}
\newcommand{\calI}{\mathcal{I}}
\newcommand{\calX}{\mathcal{X}}
\newcommand{\tet}{{\theta}}
\newcommand{\bfalp}{{\boldsymbol{\alpha}}}
\newcommand{\bfa}{\mathbf{a}}
\newcommand{\bfe}{\mathbf{e}}
\newcommand{\bfx}{\mathbf{x}}
\newcommand{\bal}{\boldsymbol{\alpha}}
\newcommand{\tsp}{\thinspace}
\begin{document}

\title[]{Birch's theorem on forms in many variables \\  with a Hessian condition}

\author{Shuntaro Yamagishi}
\address{IST Austria, Am Campus 1, 3400 Klosterneuburg, Austria}
\email{shuntaro.yamagishi@ist.ac.at}
\indent

\date{Revised on \today}

\begin{abstract}
Let $F \in \ZZ[x_1, \ldots, x_n]$ be a homogeneous form of degree $d \geq 2$, and
$V_F^*$ the singular locus of the hypersurface $\{\x \in \mathbb{A}^n_{\CC}: F(\x) = 0 \}$.
A longstanding result of Birch states that there is a non-trivial integral solution to the equation
$F(x_1, \ldots, x_n) = 0$ provided $n > \dim V_F^* + (d-1) 2^d$ and there is a non-singular solution in $\RR$ and $\QQ_p$ for all primes $p$.
In this article, we give a different formulation of this result. More precisely, we replace
$\dim V_F^*$ with a quantity $\mathcal{H}_F$ defined in terms of the Hessian matrix of $F$. This quantity satisfies
$0 \leq \mathcal{H}_F \leq \dim V_F^*$; therefore, we improve on the aforementioned result of Birch if $\mathcal{H}_F < \dim V_F^*$.
We also prove the corresponding result for systems of forms of equal degree.
\end{abstract}

\subjclass[2010] {11P55 (11G35, 14G05)}

\keywords{}

\maketitle

\section{Introduction}
Let $F_1, \ldots, F_R \in \ZZ[x_1, \ldots, x_n]$ be homogeneous forms of equal degree $d \geq 2$.
We define
$$
V_{\mathbf{F}}^* = \left\{ \x \in \mathbb{A}^n_{\CC}: \textnormal{rank }  \left[  \frac{ \partial F_{\ell} }{ \partial x_{i}} (\x)  \right]_{\substack{ 1 \leq \ell \leq R  \\  1 \leq i \leq n }}   < R   \right\}.
$$
A longstanding result of Birch \cite{Bir} states the following.
\begin{thm} [Birch]
\label{Birchthm}
Let $F_1, \ldots, F_R \in \ZZ[x_1, \ldots, x_n]$ be  homogeneous forms of degree $d \geq 2$.
Let $\mathcal{B} \subseteq [-1,1]^n$ be a box whose sides have lengths at most $1$ and are parallel to the coordinate axes.
Suppose
$$
n > \dim V_{\mathbf{F}}^* + R(R+1) (d-1) 2^d.
$$
Let $\delta > 0$ be sufficiently small. Then there exists $c_{\mathbf{F}} \geq 0$ such that
\begin{eqnarray}
\# \{ \x \in P \mathcal{B} \cap \ZZ^{n}: F_1(\x) = \cdots = F_R(\x)  = 0  \}
= c_{\mathbf{F}} P^{n - d} + O(P^{n - d - \delta}),
\end{eqnarray}
for $P > 0$. Furthermore, $c_{\mathbf{F}} > 0$ if there exists a non-singular solution to the system of equations
$F_1(\x) = \cdots = F_R(\x) = 0$ in the interior of $\mathcal{B}$ and in $\QQ_p$ for all primes $p$.
\end{thm}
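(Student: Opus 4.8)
The plan is to establish this by the Hardy--Littlewood circle method, essentially reproducing Birch's argument \cite{Bir} while keeping track of the role of $\dim V_{\mathbf F}^*$. Write $e(\theta)=e^{2\pi i \theta}$ and, for $\bal=(\alpha_1,\dots,\alpha_R)\in[0,1)^R$, set
$$
S(\bal)=\sum_{\x\in P\calB\cap\ZZ^n} e\bigl(\alpha_1 F_1(\x)+\cdots+\alpha_R F_R(\x)\bigr).
$$
Orthogonality of additive characters gives
$$
\#\{\x\in P\calB\cap\ZZ^n: F_1(\x)=\cdots=F_R(\x)=0\}=\int_{[0,1)^R} S(\bal)\,d\bal ,
$$
so the entire problem reduces to evaluating this integral. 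First I would fix an auxiliary parameter $\theta\in(0,1)$ and, via Dirichlet's theorem on simultaneous rational approximation, dissect $[0,1)^R$ into \emph{major arcs} $\mathfrak M$ (small boxes around rationals $\a/q$ with $1\le q\le P^{\theta}$ and $|\alpha_\ell-a_\ell/q|\le q^{-1}P^{\theta-d}$) and the complementary \emph{minor arcs} $\mathfrak m$.

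The heart of the argument is a pointwise bound for $|S(\bal)|$ on $\mathfrak m$. Applying Weyl differencing $d-1$ times to $|S(\bal)|^{2^{d-1}}$ and summing the resulting exponential sum (which is linear in the principal variable) over the box, one bounds $|S(\bal)|^{2^{d-1}}$ in terms of the number of integer tuples $(\bfh^{(1)},\dots,\bfh^{(d-1)})$ in a box of side $P$ for which every coordinate of the multilinear forms $\sum_{\ell}\alpha_\ell\,\Gamma_\ell(\bfh^{(1)},\dots,\bfh^{(d-1)};\,\cdot\,)$, obtained by full polarization of the $F_\ell$, lies within $O(P^{-d+1})$ of an integer.

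I expect the passage from this counting quantity to the geometry of $V_{\mathbf F}^*$ to be the main obstacle. By Davenport's shrinking lemma from the geometry of numbers, a large value of the above count persists after contracting the box, and this in turn forces many integer points $\bfh$ at which the associated bilinear forms, whose coefficient matrices are built from the higher-order partial derivatives of the $F_\ell$, degenerate simultaneously. Over $\CC$ the locus on which this degeneracy occurs is governed by the vanishing of the maximal minors of the Jacobian, i.e.\ by $V_{\mathbf F}^*$, so the number of such integer points in a box of side $B$ is controlled by $\dim V_{\mathbf F}^*$. Feeding this dimension bound back through the differencing inequality and optimizing $\theta$ produces, exactly when
$$
n-\dim V_{\mathbf F}^* > R(R+1)(d-1)2^d ,
$$
a power-saving minor-arc estimate $\int_{\mathfrak m}|S(\bal)|\,d\bal \ll P^{\,n-Rd-\delta}$ for some $\delta>0$.

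It remains to treat $\mathfrak M$ and to verify positivity. On a major arc around $\a/q$, standard approximations factor $S(\bal)$ as $q^{-n}S_{\a,q}\,I(\bal-\a/q)$ up to an admissible error, where $S_{\a,q}$ is a complete exponential sum modulo $q$ and $I$ an oscillatory integral; summing over $\a,q$ and integrating yields the main term $\mathfrak S\,\mathfrak I\,P^{\,n-Rd}$, with $\mathfrak S=\prod_p\sigma_p$ the singular series and $\mathfrak I$ the singular integral. The same inequality $n-\dim V_{\mathbf F}^* > R(R+1)(d-1)2^d$ guarantees absolute convergence of $\mathfrak S$ and of the auxiliary integrals, so one may take $c_{\mathbf F}=\mathfrak S\,\mathfrak I\ge 0$. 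Finally, $\mathfrak I>0$ is equivalent to the existence of a non-singular real zero in the interior of $\calB$, while $\sigma_p>0$ for every $p$ is equivalent, by Hensel's lemma, to the existence of a non-singular $p$-adic zero; since $\sigma_p=1+O(p^{-1-\eta})$ for all but finitely many $p$, these local conditions give $\mathfrak S>0$ and hence $c_{\mathbf F}>0$, completing the proof.
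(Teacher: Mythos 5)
You should note at the outset that the paper does not prove Theorem \ref{Birchthm} at all: it is quoted as a known result of Birch \cite{Bir}, and Section 2 only reworks the two ingredients (the differencing inequality (\ref{SSeqn}) and the alternative of Lemma \ref{lem2.5}) that must be modified to obtain Theorem \ref{mainthm}. Your outline is the standard circle-method proof from \cite{Bir}, so you are following the same route as the source the paper leans on, and the overall architecture (orthogonality, Dirichlet dissection, Weyl differencing of $|S(\bal)|^{2^{d-1}}$, the counting function $N^{(d-1)}$, singular series and integral, Hensel lifting for positivity) is correct. Two caveats. First, your sketch defers the one genuinely nontrivial step: the passage from the count of integer tuples at which all the multilinear forms $\Gamma_\ell(\x_1,\dots,\x_{d-1},\mathbf{e}_i)$ vanish to a bound of the shape $\ll B^{(d-2)n+\dim V_{\mathbf{F}}^*}$ is carried out in \cite{Bir} by restricting the relevant affine variety to the diagonal $\x_1=\cdots=\x_{d-1}$, where the $R\times n$ matrix of multilinear forms becomes a multiple of the Jacobian of $\mathbf{F}$, so that the diagonal slice lands inside $V_{\mathbf{F}}^*$ and an affine dimension count finishes the job; you gesture at this (``governed by the vanishing of the maximal minors of the Jacobian'') but supply neither the dimension argument nor the rank dichotomy that produces the rational-approximation alternative (the analogue of Lemma \ref{lem2.5}(ii)), and these are precisely the points the paper's Lemma \ref{main lem} and Lemma \ref{lem2.5} exist to replace. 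Second, your main term $c_{\mathbf{F}}P^{n-Rd}$ is the correct order of magnitude for a system of $R$ forms of degree $d$; the exponent $n-d$ printed in the statement agrees with this only when $R=1$ and appears to be a typo in the paper, so your version is the one Birch actually proves.
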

This classical result of Birch has been greatly influential. For general information we refer to the survey \cite{TBsurvey} and for a comprehensive overview of the cubic case to the book \cite{TBbook}. We only mention some of the related results here. In the case $R = 1$, the number of variables required were improved in various work \cite{BP, H, MV}, while when $R > 1$ Rydin Myerson made a significant progress \cite{RM1, RM2, RM3}.
For systems of forms involving differing degrees, there are results by Schmidt \cite{S} and Browning and Heath-Brown \cite{BHB1}.
Birch's theorem over the function field $\FF_q(t)$ was established by Lee \cite{lee} and the techniques were further refined
with applications to algebraic geometry in the work of Browning and Vishe \cite{BV1, BV2} and Browning and Sawin \cite{BS}.
One can also consider Birch's theorem for integral points with prime coordinates, where Cook and Magyar \cite{CM} first achieved the major result
analogous to the above theorem in this setting, with further developments by the author \cite{Yam, Yam2} and Liu and Zhao \cite{LZ}.

In many of these problems involving techniques from Birch's theorem, it is customary to consider the codimension of $V_{\mathbf{F}}^*$, that
is the result holds if the codimension of $V_{\mathbf{F}}^*$ is greater than some quantity depending on the degree.
In this article, we prove a variant of Theorem \ref{Birchthm} in terms of a condition on the Hessian matrices of $F_1, \ldots, F_R$, in which
the number of variables required is always less than or equal to that in the above result of Birch.
It is also worth mentioning the work of Dietmann \cite{Di} and Schindler \cite{Sch}, where Birch's theorem is proved in terms of
the dimension of the singular loci of forms in the pencil.

Given a homogeneous form $G \in \ZZ[x_1, \ldots, x_n]$, we define $H_G$ to be the Hessian matrix of $G$, i.e.
$$
H_G(\x) = \left[  \frac{ \partial^2 G }{ \partial x_{i} \partial x_j} (\x) \right]_{\substack{ 1 \leq i \leq n  \\  1 \leq j \leq n }}.
$$
We note that if $\deg G = 2$, then every coordinate of $H_G (\x)$ is a constant; therefore, we write $H_G(\x) = H_G$ in this case.

Let us define
\begin{eqnarray}
\label{def}
\mathcal{H}_{G} =
\begin{cases}
\max_{0 \leq r \leq n}   \dim \{ \x \in \mathbb{A}^n_{\CC}:  \textnormal{rank }  H_{G}(\x)  \leq r  \} - r  & \mbox{if } d > 2,  \\
n - \textnormal{rank }  H_{G} & \mbox{if } d = 2.
\end{cases}
\end{eqnarray}
The quantity $\dim \{ \x \in \mathbb{A}^n_{\CC}:  \textnormal{rank }  H_{G}(\x)  \leq r  \}$ is not unusual and
has made appearances in the literature before, for example in \cite{BHB} and \cite{HB3} (though in the latter work, it is only for degree $3$ and in terms of the number of integral points). In fact, it is proved in \cite[Lemma 2]{BHB} that
\begin{eqnarray}
\label{Hineq}
0 \leq \mathcal{H}_{G} \leq \dim V_G^*.
\end{eqnarray}

In this article, we prove that we may replace $\dim V_{\mathbf{F}}^*$ in Theorem \ref{Birchthm} with
\begin{equation}\label{HESS}
\max_{\c \in \ZZ^R \setminus \{ \mathbf{0} \}} \mathcal{H}_{\c . \mathbf{F}},
\end{equation}
where $\c . \mathbf{F} = c_1 F_1 + \cdots + c_R F_R$.
\begin{thm}
\label{mainthm}
Let $F_1, \ldots, F_R \in \ZZ[x_1, \ldots, x_n]$ and $\mathcal{B}$ as in Theorem \ref{Birchthm}.
Suppose
$$
n > \max_{\c \in \ZZ^R \setminus \{ \mathbf{0} \}} \mathcal{H}_{\c . \mathbf{F}} +  R (R+1) (d-1) 2^d.
$$
Let $\delta > 0$ be sufficiently small. Then there exists $c_{\mathbf{F}} \geq 0$ such that
\begin{eqnarray}
\# \{ \x \in P \mathcal{B} \cap \ZZ^{n}: F_1(\x) = \cdots = F_R(\x)  = 0  \}
= c_{\mathbf{F}} P^{n - d} + O(P^{n - d - \delta}),
\end{eqnarray}
for $P > 0$. Furthermore, $c_{\mathbf{F}} > 0$ if the same local conditions as in Theorem \ref{Birchthm} are satisfied.
\end{thm}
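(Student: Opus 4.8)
The plan is to re-run Birch's circle-method proof of Theorem \ref{Birchthm} essentially verbatim, isolating the single step in which the quantity $\dim V_{\mathbf F}^*$ is used and replacing the geometric input there. Writing $S(\bfalp)=\sum_{\x\in P\mathcal B\cap\ZZ^n}e(\alpha_1F_1(\x)+\cdots+\alpha_RF_R(\x))$ and decomposing the unit cube into major and minor arcs, the major-arc contribution yields the main term $c_{\mathbf F}P^{n-d}$, the factorization $c_{\mathbf F}=\mathfrak S\,\mathfrak I$ into singular series and singular integral, and the positivity of $c_{\mathbf F}$ under the stated local conditions; this analysis depends only on $n$ being large and can be imported from Birch unchanged. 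Hence everything reduces to the minor-arc estimate: establishing the same power saving for $S(\bfalp)$ with decay rate governed by $n-\max_{\c}\mathcal H_{\c.\mathbf F}$ rather than $n-\dim V_{\mathbf F}^*$. When $d=2$ this is immediate, since then $S(\bfalp)$ is itself a Gauss sum whose size is controlled by $\textnormal{rank}\,H_{\c.\mathbf F}$ while $\mathcal H_{\c.\mathbf F}=n-\textnormal{rank}\,H_{\c.\mathbf F}$ by definition; the substance lies in $d\ge3$.

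For $d\ge3$ I would apply Weyl differencing $d-2$ times to $|S(\bfalp)|^{2^{d-2}}$, reducing matters to a sum over $\bfh_1,\dots,\bfh_{d-2}\in(-P,P]^n\cap\ZZ^n$ of quadratic exponential sums in $\x$ whose symmetric coefficient matrix is $M_{\bfalp}(\bfh_1,\dots,\bfh_{d-2})=\sum_{\ell}\alpha_\ell[\Phi_\ell(\mathbf e_i,\mathbf e_j,\bfh_1,\dots,\bfh_{d-2})]_{i,j}$, where $\Phi_\ell$ polarizes $F_\ell$. The crucial point is that on the diagonal $\bfh_1=\cdots=\bfh_{d-2}=\x$ one has, up to a constant, $M_{\bfalp}=H_{\bfalp.\mathbf F}(\x)$. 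Bounding each quadratic sum by the standard estimate in terms of $\textnormal{rank}\,M_{\bfalp}$ and partitioning the outer sum according to the value of this rank, the total contribution is controlled by the dimensions of the loci $\widetilde W_r=\{(\bfh_1,\dots,\bfh_{d-2}):\textnormal{rank}\,M_{\bfalp}\le r\}$. Because the minor arcs place $\bfalp$ near a rational point $\mathbf a/q$ with $\mathbf a\in\ZZ^R$, the matrix that actually governs the relevant local densities is $H_{\mathbf a.\mathbf F}$ with integral numerator vector, and this is exactly why the maximum in \eqref{HESS} is taken over $\c\in\ZZ^R\setminus\{\mathbf 0\}$.

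The heart of the argument, and the step I expect to be the main obstacle, is a geometric lemma converting the rank of the multilinear matrix $M$ into the rank of the Hessian: for each $r$,
$$\dim\widetilde W_r\ \le\ (d-3)\,n+\dim\{\x\in\mathbb A^n_{\CC}:\textnormal{rank}\,H_{\c.\mathbf F}(\x)\le r\}.$$
Granting this, after accounting for the factor from the linearised variable and maximising over $r$, the exponent produced is $\max_r\big(\dim\{\textnormal{rank}\,H_{\c.\mathbf F}\le r\}-r\big)=\mathcal H_{\c.\mathbf F}$, precisely matching \eqref{def} and leaving Birch's constant $R(R+1)(d-1)2^d$ untouched. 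The inequality is an equality for $d=3$, where $M(\bfh_1)=H_{\c.\mathbf F}(\bfh_1)$. For larger $d$ the natural tools are the polarization identity, which writes $M(\bfh_1,\dots,\bfh_{d-2})$ as a signed sum of Hessians $H_{\c.\mathbf F}$ evaluated at the partial sums $\sum_{k\in S}\bfh_k$, together with an induction on the degree that fixes one vector $\bfh_{d-2}=\mathbf v$ and recognises $M(\cdot,\mathbf v)$ as the multilinear matrix of a form of degree $d-1$. The delicate point, where the real work lies, is to control the dimension uniformly in the fibre variable $\mathbf v$ so that the bound collapses back onto $H_{\c.\mathbf F}$ rather than onto Hessians of the auxiliary forms; once this comparison is secured, substituting the resulting minor-arc bound into Birch's major/minor-arc decomposition completes the proof, with the passage from a single form to the system handled by carrying the integral direction $\c$ through the stratification as above.
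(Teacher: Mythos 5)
Your overall skeleton is correct and matches the paper's: re-run Birch's major/minor-arc analysis and replace only the geometric counting input on the minor arcs. The problem is that the proposal stops exactly at the one step where the paper does something new. The inequality you isolate,
$$
\dim \widetilde W_r \ \le\ (d-3)n + \dim\{\x \in \mathbb{A}^n_{\CC} : \textnormal{rank}\, H_{\c.\mathbf F}(\x) \le r\},
$$
is precisely the paper's inequality (\ref{dim}), and together with the point-counting bound of Lemma \ref{lin alg} it \emph{is} the content of the key Lemma \ref{main lem}; writing ``granting this'' concedes the theorem. Moreover, the route you sketch for proving it --- polarizing $M(\bfh_1,\dots,\bfh_{d-2})$ into a signed sum of Hessians at partial sums and inducting on the degree --- runs into exactly the difficulty you name and which I do not see how to resolve: the rank of a signed sum of Hessians is not controlled by the ranks of the individual summands, and the fibre-by-fibre dimension count does not collapse back onto $H_{\c.\mathbf F}$. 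The paper's proof goes in the opposite direction and is essentially one line: $\widetilde W_r$ (the paper's $Z_+(r)$) is cut out by the $(r+1)\times(r+1)$ minors, hence is an affine cone in $\mathbb{A}^{(d-2)n}_{\CC}$, so every irreducible component meets the diagonal $\x_1=\cdots=\x_{d-2}$, a linear subspace of codimension $(d-3)n$; the restriction of the multilinear matrix to the diagonal is a nonzero constant multiple of the Hessian $H_{\c.\mathbf F}(\x)$ (Lemma \ref{seconf der}); and the affine dimension theorem then gives $\dim Z_+(r) - (d-3)n \le \dim\{\x:\textnormal{rank}\, H_{\c.\mathbf F}(\x)\le r\}$, which is (\ref{dim}). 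No induction on $d$, no polarization identity, and no uniformity in a fibre variable are needed. You need to supply this (or some other complete) argument for the displayed inequality; as written the proof has a hole at its centre.

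Two secondary inaccuracies. First, the reason the maximum in (\ref{HESS}) runs over \emph{integer} vectors $\c$ is not that $\bfalp$ lies near a rational point $\mathbf a/q$ (on the minor arcs it need not); it is that one dichotomizes on the rank of the integer matrix $M=[\Gamma_\ell(\x_1,\dots,\x_{d-1},\mathbf e_j)]$ assembled over all counted tuples: if this rank is $<R$, its left kernel contains a nonzero integer vector $\c$ and the whole count collapses onto the single form $\c.\mathbf F$, to which Lemma \ref{main lem} applies; if the rank equals $R$, a nonsingular $R\times R$ minor produces the rational approximation of alternative (ii). Second, differencing only $d-2$ times and invoking a ``standard estimate in terms of $\textnormal{rank}\,M_{\bfalp}$'' for the resulting quadratic exponential sum is not available as stated: the rank of a real symmetric matrix $A$ does not by itself bound $\sum_{\x}\exp(2\pi i\,\x^{T}A\x)$. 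One must apply Cauchy--Schwarz once more to linearize, which lands you back at the system $\sum_\ell\alpha_\ell\Gamma_{\ell}(\x_1,\dots,\x_{d-1},\mathbf e_i)\approx 0$ counted by $N^{(d-1)}(P^{\eta};\bfalp)$ in the paper --- i.e.\ at the standard Birch/Davenport setup your argument was trying to shortcut.
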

It follows from (\ref{Hineq}) and \cite[pp. 209]{Sch} that
$$
0 \leq \mathcal{H}_{\c . \mathbf{F}} \leq \dim V_{\c . \mathbf{F}}^* \leq \dim V_{\mathbf{F}}^*
$$
for all $\c \in \ZZ^R \setminus \{ \mathbf{0} \}$.
Therefore, the number of variables required in Theorem \ref{mainthm} is never more than that in Theorem \ref{Birchthm}.
It seems likely that in many of the results related to Birch's theorem mentioned above, we may replace the dimension of $V_{\mathbf{F}}^*$
with (\ref{HESS}) as in our theorem. We shall present examples in Section \ref{EXAMP} where $\mathcal{H}_F = 0$ while $\dim V_F^* = n/2$.
Finally, it is interesting to note that in the author's work with Schindler \cite{SY}, in which an upper bound for the number of rational points with bounded height
on certain manifolds are established, a condition regarding linear combinations of Hessian matrices also appears in the statement of the result,
which may or may not have connections to the present work.

\textit{Acknowledgements.} The author was supported by the NWO Veni Grant \texttt{016.Veni.192.047} during his time at Utrecht University and
by the FWF grant P 36278 at the Institute of Science and Technology Austria while working on this article. He would also like to thank Tim Browning, Jakob Glas and Simon Rydin Myerson for useful suggestions and conversations.

\section{A variant of the classical Weyl differencing argument}
Given $G \in \ZZ[x_1, \ldots, x_n]$ a homogeneous form of degree $d \geq 2$, we write
\begin{eqnarray}
\label{def1}
G(\x) = \sum_{j_1, \ldots, j_d = 1}^n G_{\mathbf{j}} x_{j_1} \cdots x_{j_d},
\end{eqnarray}
where $G_{\mathbf{j}} \in \QQ$ are symmetric with respect to $\mathbf{j} = (j_1, \ldots, j_d)$.
Let us denote by $\Gamma_G$ the multilinear form associated to $G$, which is defined to be
\begin{eqnarray}
\Gamma_{G} (\x_1, \ldots, \x_d) = d! \sum_{\mathbf{j}} G_{\mathbf{j}}  x_{1, j_1} \cdots x_{d, j_d},
\end{eqnarray}
where the range of summation is the same  as in (\ref{def1}). We prove the following estimate in Section \ref{proof}.
\begin{lem}
\label{main lem}
Let $G \in \ZZ[x_1, \ldots, x_n]$ be a homogeneous form of degree $d \geq 2$. Let $B \geq 1$.
Then
\begin{eqnarray}
\notag
&&\# \{ \x_1, \ldots, \x_{d-1} \in ([-B, B] \cap \ZZ )^n:  \Gamma_G (\x_1, \ldots, \x_{d-1}, \mathbf{e}_i )  = 0  ~  (1 \leq  i \leq n)   \}
\\
\notag
&\ll& B^{(d-2)n + \mathcal{H}_G},
\end{eqnarray}
where the implicit constant depends only on $d$ and $n$.
\end{lem}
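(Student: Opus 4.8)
The plan is to peel off the last vector $\x_{d-1}$ and turn the inner count into a kernel-dimension estimate. For a fixed choice of $\x_1,\dots,\x_{d-2}\in\ZZ^n$, introduce the symmetric $n\times n$ matrix $M(\x_1,\dots,\x_{d-2})$ with $(i,j)$-entry $\Gamma_G(\x_1,\dots,\x_{d-2},\mathbf{e}_i,\mathbf{e}_j)$; its entries are rational and multilinear in $(\x_1,\dots,\x_{d-2})$. Since $\Gamma_G$ is symmetric, multilinearity in the last slot gives $(M(\x_1,\dots,\x_{d-2})\,\x_{d-1})_i=\Gamma_G(\x_1,\dots,\x_{d-1},\mathbf{e}_i)$, so the condition $\Gamma_G(\x_1,\dots,\x_{d-1},\mathbf{e}_i)=0$ for all $i$ is exactly $M(\x_1,\dots,\x_{d-2})\,\x_{d-1}=\mathbf 0$. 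Thus $\x_{d-1}$ must lie in $\ker M(\x_1,\dots,\x_{d-2})$, a rational subspace of dimension $n-\textnormal{rank } M$. Projecting this kernel injectively onto a suitable set of $n-\textnormal{rank } M$ coordinates, I would bound the number of admissible $\x_{d-1}$ in the box by $\ll B^{\,n-\textnormal{rank } M(\x_1,\dots,\x_{d-2})}$, with a constant depending only on $n$ (in particular uniform in $G$).

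Next I would sum over $\x_1,\dots,\x_{d-2}$ in the box, grouping by the value $r$ of $\textnormal{rank } M$, to obtain a bound $\ll\sum_{r=0}^n B^{\,n-r}\,N_r$, where $N_r$ is the number of such tuples with $\textnormal{rank } M\le r$. The set $Z_r=\{(\x_1,\dots,\x_{d-2})\in\mathbb{A}^{(d-2)n}_{\CC}:\textnormal{rank } M(\x_1,\dots,\x_{d-2})\le r\}$ is an affine variety over $\QQ$, cut out by the $(r+1)\times(r+1)$ minors of $M$, which are homogeneous of degree $(r+1)(d-2)$, bounded in terms of $n$ and $d$. By the standard uniform estimate for integral points of bounded height on an affine variety of bounded degree, $N_r\ll B^{\dim Z_r}$ with a constant depending only on $n$ and $d$, so the count is $\ll\sum_{r=0}^n B^{\,n-r+\dim Z_r}$.

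The heart of the argument, and the step I expect to be the main obstacle, is to bound $\dim Z_r$ by the dimension of the Hessian rank locus $W_r=\{\x\in\mathbb{A}^n_{\CC}:\textnormal{rank } H_G(\x)\le r\}$ occurring in the definition of $\mathcal{H}_G$. The key observation is that specializing every argument of $M$ to one vector returns the Hessian: a short computation with $\Gamma_G$ gives $M(\x,\dots,\x)=(d-2)!\,H_G(\x)$, so intersecting $Z_r$ with the diagonal $\Delta=\{\x_1=\cdots=\x_{d-2}\}$ recovers $W_r$ under $\x\mapsto(\x,\dots,\x)$. Because the defining minors are homogeneous, every irreducible component of $Z_r$ is a cone and hence passes through the origin, which lies on $\Delta$; thus a top-dimensional component of $Z_r$ already meets $\Delta$. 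Since $\Delta$ is cut out by the $(d-3)n$ linear equations $\x_2=\x_1,\dots,\x_{d-2}=\x_1$, the affine dimension theorem then yields $\dim W_r=\dim(Z_r\cap\Delta)\ge\dim Z_r-(d-3)n$, i.e. $\dim Z_r\le\dim W_r+(d-3)n$.

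Assembling the three steps, for every $r$ we have $n-r+\dim Z_r\le(d-2)n+(\dim W_r-r)\le(d-2)n+\mathcal{H}_G$, so summing the $n+1$ terms gives the claimed $\ll B^{(d-2)n+\mathcal{H}_G}$. The case $d=2$ is a degenerate version of the same computation: there are no vectors $\x_1,\dots,\x_{d-2}$, the matrix $M$ is the constant Hessian $H_G$, and the first step alone bounds the count by $\ll B^{\,n-\textnormal{rank } H_G}=B^{\mathcal{H}_G}$, which is $(d-2)n+\mathcal{H}_G$ when $d=2$. The only non-elementary inputs are the uniform $B^{\dim}$ bound for integral points on a bounded-degree variety and the affine dimension theorem; the main technical point to verify throughout is that all implied constants remain uniform in the coefficients of $G$, so that they depend only on $n$ and $d$ as required.
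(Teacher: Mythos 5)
Your proposal is correct and follows essentially the same route as the paper: peel off $\x_{d-1}$ as a kernel count of size $\ll B^{n-r}$, bound the number of integral tuples on the rank-$\leq r$ locus by $B^{\dim}$ via the uniform point-count estimate, and control that dimension by intersecting with the diagonal (using that the locus is a cone through the origin) to recover the Hessian rank locus through the identity $\Gamma_G(\x,\ldots,\x,\mathbf{e}_a,\mathbf{e}_b) = (d-2)!\,\frac{\partial^2 G}{\partial x_a \partial x_b}(\x)$. These are precisely the paper's Lemma \ref{seconf der} and Lemma \ref{lin alg}, and your assembly of the three steps, including the separate degenerate treatment of $d=2$, matches the paper's argument.
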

When $R=1$ the corresponding bound used  in \cite{Bir} for this quantity is $\ll B^{(d-2)n +  \dim V_F^*}$,
which results in the presence of $\dim V_F^*$ in the statement of Theorem \ref{Birchthm};
therefore, by using this estimate instead and following through the proof in \cite{Bir}, we obtain Theorem \ref{mainthm}.
However, we will follow the argument in \cite{Sch} which differs from that in \cite{Bir} when $R > 1$; this approach has the
benefit of being able to make use of Lemma \ref{main lem} easily for systems of forms.

Let $F_1, \ldots, F_R \in \ZZ[x_1, \ldots, x_n]$ be homogeneous forms of degree $d$, and $\mathcal{B}$ as in Theorem \ref{Birchthm}.
For $P > 0$ and $\bal \in \RR^R$, we define
$$
S(\bal) = \sum_{\x \in P \mathcal{B} } \exp \left(2 \pi i  \sum_{\ell = 1}^R  \alpha_\ell F_\ell(\x) \right).
$$
For simplicity we denote $\Gamma_{\ell} = \Gamma_{F_\ell}$. Let $\varepsilon > 0$.
By following through the argument in \cite{Bir} or\footnote{This part of the argument in \cite{Bir}
is mostly referenced to \cite{D}.} \cite[pp. 13]{SS} (with $P = P_1 = \cdots = P_n$ and without smooth weights),
we obtain, for any $0 < \eta < 1$,
\begin{eqnarray}
\label{SSeqn}
|S(\boldsymbol{\alpha})|^{2^{d-1}}
\ll
P^{n( 2^{d-1} - d + 1) + \varepsilon } P^{ (1 - \eta ) (d-1) n  } \# N^{(d-1)}(P^{\eta}; \bal),
\end{eqnarray}
where
\begin{eqnarray}
N^{(d-1)}(P^{\eta}; \bal)
\notag
&=&
\{  (\x_1, \ldots, \x_{d-1}) \in ([- P^{\eta}, P^{\eta}] \cap \ZZ )^{n (d-1)}:
\\
\notag
&& \| \sum_{\ell = 1}^R \alpha_\ell \Gamma_{\ell} (\x_1, \ldots, \x_{d-1}, \mathbf{e}_i )  \| <
P^{ (\eta - 1) (d-1) - 1}  ~  (1 \leq  i \leq n)  \}
\end{eqnarray}
and the implicit constant is independent of $P$, $\eta$ and $\bal$. Here $\| \cdot \|$ denotes the distance to the closest integer.
Let us denote
$$
\sigma = \max_{\c \in \ZZ^R \setminus \{\mathbf{0} \} } \mathcal{H}_{\c.\mathbf{F}}.
$$
\begin{lem}
\label{lem2.5}
Let $P > 0$, $\bal \in \mathbb{R}^R$ and  $\eta \in (0,1)$.
Then one of the following alternatives holds.
\begin{enumerate}[label= $(\textnormal{\roman*})$]
\item For any $\varepsilon > 0$, we have
$$
|S(\bal) | \ll {P}^{n -  \frac{(n - \sigma) \eta}{2^{d-1}}  + \varepsilon},
$$
where the implicit constant depends only on $d$, $n$ and $\varepsilon$.

\item There exist $q, a_1, \ldots, a_R \in \ZZ$ such that $\gcd(q, \a) = 1$,
$$
1 \leq  q  \ll {P}^{R(d-1) \eta}  \quad  \textnormal{and}  \quad
|q \alpha_\ell - a_\ell| \ll {P}^{- d  + R(d-1) \eta}  ~   (1 \leq \ell \leq R),
$$
where the implicit constant depends only on $d$, $n$, $R$ and the coefficients of $\mathbf{F}$.
\end{enumerate}

\end{lem}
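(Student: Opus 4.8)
The plan is to attach the standard geometry-of-numbers dichotomy to the Weyl differencing estimate (\ref{SSeqn}), following \cite{Sch}, the one new ingredient being that the count of \emph{exact} integer solutions of the auxiliary multilinear system is estimated through Lemma \ref{main lem} rather than through $\dim V_{\mathbf{F}}^*$. Write $A = \# N^{(d-1)}(P^\eta;\bal)$. The first point is that alternative $(\mathrm{i})$ is nothing but the ``small count'' case: if $A \ll P^{\eta((d-2)n+\sigma)+\varepsilon}$, then substituting into (\ref{SSeqn}) and collecting the exponents of $P$ gives the total exponent
\[
n(2^{d-1}-d+1)+(1-\eta)(d-1)n+\eta\big((d-2)n+\sigma\big) = 2^{d-1}n-\eta(n-\sigma),
\]
so that $|S(\bal)| \ll P^{\,n-(n-\sigma)\eta/2^{d-1}+\varepsilon}$, which is precisely $(\mathrm{i})$. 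Hence it suffices to show that whenever $A$ exceeds this threshold we are forced into alternative $(\mathrm{ii})$.

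For the converse I would run the classical shrinking argument of Davenport applied to the symmetric multilinear forms $\Phi_i(\x_1,\ldots,\x_{d-1}) = \sum_{\ell=1}^{R}\alpha_\ell \Gamma_\ell(\x_1,\ldots,\x_{d-1},\mathbf{e}_i)$, which are homogeneous of degree $d-1$ in the block variables. Because scaling the variables by a factor $0<Z\le 1$ multiplies each $\Phi_i$ by $Z^{d-1}$, the scaling of the variables matches the scaling of the fractional tolerance $P^{(\eta-1)(d-1)-1}$, and Davenport's lemma yields a bound of the shape $A \ll Z^{-(d-1)n} N(Z)$, where $N(Z)$ counts the block vectors lying in the shrunken box for which $\|\Phi_i\|<Z^{d-1}P^{(\eta-1)(d-1)-1}$ for every $i$. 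Choosing $Z$ at the scale where the shrunken tolerance drops below the resolution at which the integer-valued quantities $\Gamma_\ell(\x_1,\ldots,\x_{d-1},\mathbf{e}_i)$ can be distinguished, the vectors counted by $N(Z)$ must satisfy a genuine arithmetic relation rather than a merely approximate one.

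This relation splits into two cases, and it is exactly here that Lemma \ref{main lem} enters. Either the relation forces the exact vanishing $\Gamma_{\a.\mathbf{F}}(\x_1,\ldots,\x_{d-1},\mathbf{e}_i)=0$ for all $i$ for some nonzero $\a\in\ZZ^R$ associated to a rational approximation of $\bal$, in which case Lemma \ref{main lem} applied to $\a.\mathbf{F}$ bounds the number of such solutions by $\ll (ZP^\eta)^{(d-2)n+\mathcal{H}_{\a.\mathbf{F}}} \le (ZP^\eta)^{(d-2)n+\sigma}$, and unwinding the shrinking inequality returns $A \ll P^{\eta((d-2)n+\sigma)+\varepsilon}$, i.e. the threshold is not exceeded; or else the near-integrality of $\sum_\ell \alpha_\ell\Gamma_\ell(\x_1,\ldots,\x_{d-1},\mathbf{e}_i)$ at a nonzero block vector, combined with a Dirichlet-type simultaneous approximation across the $R$ forms, produces $q,a_1,\ldots,a_R\in\ZZ$ with $\gcd(q,\a)=1$, $1\le q\ll P^{R(d-1)\eta}$ and $|q\alpha_\ell-a_\ell|\ll P^{-d+R(d-1)\eta}$, which is $(\mathrm{ii})$. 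The exponents $R(d-1)\eta$ track the $R$-fold simultaneous approximation together with the degree-$(d-1)$ scaling, exactly as in \cite{Sch}.

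The hard part is the geometry-of-numbers bookkeeping in the middle step: calibrating the shrinking parameter $Z$ and tracking precisely how the fractional tolerance, the $Z^{d-1}$-scaling of the degree-$(d-1)$ forms, and the simultaneous nature of the $R$ conditions interact so as to pin down the stated sizes of $q$ and of $|q\alpha_\ell-a_\ell|$; this is carried out in \cite{Sch} and I would import it essentially verbatim. The only genuine deviation from \cite{Sch} is the replacement, at the step bounding the count of exact solutions, of the $\dim V_{\mathbf{F}}^*$ estimate by Lemma \ref{main lem}, invoked uniformly over all $\a.\mathbf{F}$; this is legitimate because $\mathcal{H}_{\a.\mathbf{F}}\le\sigma$ for every $\a\in\ZZ^R\setminus\{\mathbf{0}\}$ by definition of $\sigma$, and it is what produces $\sigma$ in place of $\dim V_{\mathbf{F}}^*$ in alternative $(\mathrm{i})$. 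Finally one should check consistency of the single-step case $d=2$, where $\mathcal{H}_G=n-\operatorname{rank}H_G$; there the forms $\Phi_i$ are genuinely linear and the argument is the classical one, so no separate treatment is needed.
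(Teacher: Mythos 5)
Your first paragraph is correct and matches the paper: alternative (i) does reduce, via (\ref{SSeqn}), to the bound $\# N^{(d-1)}(P^{\eta};\bal) \ll P^{\eta((d-2)n+\sigma)}$, and Lemma \ref{main lem}, applied to $\c.\mathbf{F}$ together with $\mathcal{H}_{\c.\mathbf{F}}\le\sigma$, is indeed the ingredient that supplies it. The gap is in the mechanism of the dichotomy. No second shrinking argument is run, in the paper or in \cite{Sch}: the set $N^{(d-1)}(P^{\eta};\bal)$ is already the output of the Davenport--Birch shrinking encapsulated in (\ref{SSeqn}), its elements are integer points, and the values $\Gamma_{\ell}(\x_1,\ldots,\x_{d-1},\mathbf{e}_j)$ are already integers of size $\ll P^{(d-1)\eta}$, so there is no further ``resolution'' to be gained from a parameter $Z$. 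What actually drives the case distinction is exact linear algebra over $\ZZ$: one forms the integer matrix $M$ whose columns are the vectors $\left(\Gamma_1(\x_1,\ldots,\x_{d-1},\mathbf{e}_j),\ldots,\Gamma_R(\x_1,\ldots,\x_{d-1},\mathbf{e}_j)\right)^{T}$ as $j$ ranges over $1,\ldots,n$ and the block vectors range over $N^{(d-1)}(P^{\eta};\bal)$, and splits on $\textnormal{rank}\,M$. If $\textnormal{rank}\,M=R$, a nonsingular $R\times R$ integer submatrix $M_0$ with $1\le|\det M_0|\ll P^{R(d-1)\eta}$ exists; since each selected row satisfies $\sum_{\ell}\alpha_{\ell}\Gamma_{\ell}(\cdots)=m_s+\gamma_s$ with $|\gamma_s|<P^{-d+(d-1)\eta}$, multiplying by the cofactor matrix of $M_0$ yields $q\mid\det M_0$ and $a_1,\ldots,a_R$ with exactly the bounds in (ii). If $\textnormal{rank}\,M<R$, a single $\c\in\ZZ^R\setminus\{\mathbf{0}\}$ kills every column, i.e.\ $\Gamma_{\c.\mathbf{F}}(\x_1,\ldots,\x_{d-1},\mathbf{e}_j)=0$ for all $j$ and all points of $N^{(d-1)}(P^{\eta};\bal)$, which is precisely what licenses the application of Lemma \ref{main lem}.

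Your sketch supplies neither of these two facts. First, to invoke Lemma \ref{main lem} you need exact vanishing of $\Gamma_{\c.\mathbf{F}}$ for one fixed nonzero $\c$ at every counted point; your proposal produces only a vaguely described relation ``for some nonzero $\a$ associated to a rational approximation of $\bal$'', which is neither uniform in the points nor derived from anything concrete --- in the paper $\c$ arises from a linear dependence among the rows of $M$, not from approximating $\bal$, and the vanishing is exact by fiat rather than by upgrading a near-integrality condition. Second, your route to (ii) via ``Dirichlet-type simultaneous approximation'' never confronts the fact that near-integrality of $\sum_{\ell}\alpha_{\ell}\Gamma_{\ell}$ at the available block vectors only pins down all $R$ of the $\alpha_{\ell}$ (and hence produces a common denominator $q$ of the stated size) when the assembled $R$-row coefficient matrix has full rank $R$; that rank condition is exactly the hypothesis separating the two alternatives, and it is the part your argument leaves unproved. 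Once the rank dichotomy is put in, the rest of your outline (including the exponent computation and the uniform use of $\mathcal{H}_{\c.\mathbf{F}}\le\sigma$) goes through as you describe.
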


\begin{proof}
For each $1 \leq j \leq n$, we let $M_j$ be the matrix whose columns are
$$
\begin{bmatrix}
  \Gamma_{1}(\x_{1}, \ldots, \x_{d-1}, \mathbf{e}_j) \\
  \vdots  \\
  \Gamma_{R}(\x_{1}, \ldots, \x_{d-1}, \mathbf{e}_j)
\end{bmatrix}
$$
for each $(\x_{1}, \ldots, \x_{d-1})$ in  $N^{(d-1)}(P^{\eta}; \bal)$.
We then put these matrices together and define
$$
M = [M_1 \cdots M_n].
$$
We consider two cases depending on the rank of $M$; the case $\textnormal{rank } M = R$ corresponds to alternative (ii) and the case $\textnormal{rank } M < R$ to (i) in the statement of the lemma.

First we suppose $\textnormal{rank } M = R$. Then there exists an $R \times R$ submatrix
$$
M_0
=
[\Gamma_{\ell}(\x^{(s)}_{1}, \ldots, \x^{(s)}_{d-1}, \mathbf{e}_{j_{s}})]_{1 \leq \ell, s \leq R}
$$
whose rank is $R$. By the definition of $N^{(d-1)}({P}^{\eta}; \bal)$, 
there exist  $m_{s} \in \ZZ$ and $|\gamma_{s}| < {P}^{- d + (d-1) \eta }$ satisfying
$$
\sum_{\ell = 1}^R \alpha_\ell \Gamma_{\ell}(\x_{1}^{(s)}, \ldots, \x_{d-1}^{(s)}, \mathbf{e}_{j_{s}})
= m_{s} + \gamma_{s},
$$
for each $1 \leq s \leq R$.
Let $\mathfrak{C} =  [ \mathfrak{c}_{\ell, s}]_{1 \leq \ell, s \leq R}$ be the cofactor matrix of $M_0$, i.e. it is the matrix with entries in $\ZZ$ such that
$$
M_0 \mathfrak{C}^T =  (\det M_0)  I_{R \times R} = \mathfrak{C} M_0^T,
$$
where $I_{R \times R}$ is the $R \times R$ identity matrix.
Therefore, we obtain
\begin{eqnarray}
\notag
\begin{bmatrix}
(\det M_0) \alpha_1 - \sum_{s = 1}^R  \mathfrak{c}_{1, s} m_{s}
\\
\vdots
\\
(\det M_0) \alpha_R - \sum_{s = 1}^R  \mathfrak{c}_{R, s} m_{s}
\end{bmatrix}
=
\mathfrak{C} M_0^T \begin{bmatrix}
\alpha_1
\\
\vdots
\\
\alpha_R
\end{bmatrix}
-
\mathfrak{C}  \begin{bmatrix}
m_1
\\
\vdots
\\
m_R
\end{bmatrix}
=
\mathfrak{C}  \begin{bmatrix}
\gamma_1
\\
\vdots
\\
\gamma_R
\end{bmatrix},
\end{eqnarray}
and it follows that
$$
\left| (\det M_0) \alpha_\ell - \sum_{s = 1}^R  \mathfrak{c}_{\ell, s} m_{s} \right|
\ll {P}^{\eta (R-1) (d-1)} {P}^{- d + (d-1) \eta }
=  {P}^{- d + R (d-1) \eta } ~(1 \leq \ell \leq R).
$$
Then setting
$$
q = \frac{ |\det M_0| }{\mathfrak{d} }  \quad   \textnormal{and}
 \quad  a_\ell = \frac{ \textnormal{sign} ( \det M_0 ) }{\mathfrak{d} } \sum_{s = 1}^R  \mathfrak{c}_{\ell, s} m_{s}  ~  (1 \leq \ell \leq R),
$$
where
$$
\mathfrak{d} = \gcd \left(  \det M_0,  \sum_{s = 1}^R  \mathfrak{c}_{1, s} m_{s}, \ldots, \sum_{s = 1}^R  \mathfrak{c}_{R, s} m_{s} \right),
$$
we establish alternative (ii) of the lemma on noting that
$$
1 \leq |\det M_0| \ll {P}^{\eta R (d-1)}.
$$

Next we suppose $\textnormal{rank } M < R$.
Then there exists $\c \in \ZZ^R \setminus \{ \mathbf{0} \}$ such that
$$
0 = \sum_{\ell = 1}^R c_\ell \Gamma_{\ell}(\x_{1}, \ldots, \x_{d-1}, \mathbf{e}_{j}) ~(1 \leq j \leq n),
$$
for all $(\x_{1}, \ldots, \x_{d-1})$ in  $N^{(d-1)}({P}^{\eta}; \bal)$.
Since
$$
\sum_{\ell = 1}^R c_\ell \Gamma_{\ell}(\x_{1}, \ldots, \x_{d-1}, \mathbf{e}_{j}) = \Gamma_{\c.\mathbf{F}}(\x_{1}, \ldots, \x_{d-1}, \mathbf{e}_{j}) ~(1 \leq j \leq n),
$$
it then follows that
\begin{eqnarray}
\notag
&& \#  N^{(d-1)}({P}^{\eta}; \bal)
\\
\notag
&\leq& \# \{ \x_1, \ldots, \x_{d-1} \in ( [-P^{\eta}, P^{\eta}] \cap \ZZ )^n:  \Gamma_{\c.\mathbf{F}}(\x_{1}, \ldots, \x_{d-1}, \mathbf{e}_{j}) = 0 ~(1 \leq j \leq n) \}
\\
\notag
&\ll&
P^{\eta((d-2)n + \mathcal{H}_{\c.\mathbf{F}} )},
\end{eqnarray}
where the final inequality is obtained by Lemma \ref{main lem} with $G = \c.\mathbf{F}$ and $B = P^{\eta}$.
Thus we obtain from (\ref{SSeqn}) that
\begin{eqnarray}
|S(\bal)|^{2^{d-1}}
\ll
P^{n( 2^{d-1}-d+1) + \varepsilon} P^{ (1 - \eta ) (d-1) n  } P^{ \eta(  (d-2) n + \sigma ) }
= P^{n 2^{d-1} + \varepsilon} P^{ - \eta ( n - \sigma ) },
\end{eqnarray}
and this estimate gives alternative (i) in the statement of the lemma.
\end{proof}

This lemma shows that $K$ in \cite[Lemma 4.3]{Bir}, which is defined to be
$$
K = \frac{n - \dim V_{\mathbf{F}}^*}{2^{d-1}}
$$
may be replaced by
$$
K = \frac{n - \sigma}{2^{d-1}}.
$$
Finally, by following the proof in \cite{Bir} with Lemma \ref{lem2.5}, we recover Theorem \ref{Birchthm} with $\dim V_{\mathbf{F}}^*$ replaced by
$\sigma$, namely Theorem \ref{mainthm}.

\section{Proof of Lemma \ref{main lem}}
\label{proof}
For each $0 \leq r \leq n$, we define
$$
Z(r) =
\{  \x_1, \ldots, \x_{d-2} \in \mathbb{A}_{\mathbb{C}}^n: \textnormal{rank } [\Gamma_{G} (\x_1, \ldots, \x_{d-2}, \mathbf{e}_a, \mathbf{e}_b )]_{ \substack{1 \leq a, b \leq n} }  =  r \}
$$
and
$$
Z_+(r) =
\{  \x_1, \ldots, \x_{d-2} \in \mathbb{A}_{\mathbb{C}}^n: \textnormal{rank } [\Gamma_{G} (\x_1, \ldots, \x_{d-2}, \mathbf{e}_a, \mathbf{e}_b )]_{ \substack{1 \leq a, b \leq n} }  \leq   r \}.
$$
We remark that $Z_+(r)$ is an affine variety defined by homogeneous forms. In order to understand the dimension of $Z_+(r)$, we make use of the following lemma.
\begin{lem}
\label{seconf der}
Let $d \geq 2$. For any $1 \leq a, b \leq n$, we have
$$
\Gamma_{G} (\x, \ldots, \x, \mathbf{e}_a, \mathbf{e}_b ) = \frac{d! }{d (d-1)} \cdot \frac{\partial^2 G}{ \partial x_a \partial x_b } (\x).
$$
\end{lem}
\begin{proof}
Let us fix a choice of $1 \leq \ell_1 <  \cdots <  \ell_{m} \leq n$ for some $1 \leq m \leq d$.
By linearity it suffices to prove the statement for
$$
G (x_1, \ldots, x_n) = x_{\ell_1}^{c_1} \cdots x_{\ell_{m}}^{c_m} = \sum_{\mathbf{j}} G_{\mathbf{j}} \tsp x_{j_1} \cdots x_{j_d},
$$
where $c_1 + \cdots + c_m = d$ and
$$
G_{\mathbf{j}} =
\begin{cases}
\frac{c_1 ! \cdots c_m !  }{d!} & \mbox{if }  x_{j_1} \cdots x_{j_d} = x_{\ell_1}^{c_1} \cdots x_{\ell_{m}}^{c_m}, \\
0 & \mbox{otherwise}.
\end{cases}
$$
First we deal with the case $a \neq b$. We suppose that both $a$ and $b$ occur in $\{\ell_1, \ldots, \ell_m \}$, as the result is trivial otherwise.
Let $\ell_h = a$ and $\ell_k = b$. By the definition of $\Gamma_G$, it follows that
\begin{eqnarray}
\notag
\Gamma_G(\x, \ldots, \x, \mathbf{e}_a, \mathbf{e}_b )
&=&
d! \sum_{\mathbf{j}} G_{\mathbf{j}} x_{j_1} \cdots x_{j_{d-2}} \mathbbm{1}_{j_{d-1} = a } \mathbbm{1}_{j_{d} = b }
\\
\notag
&=&
d! \frac{x_{\ell_1}^{c_1} \cdots x_{\ell_{m}}^{c_m}}{x_a x_b} \sum_{\mathbf{j}} G_{\mathbf{j}} \mathbbm{1}_{j_{d-1} = a } \mathbbm{1}_{j_{d} = b }
\notag
\\
\notag
&=&
\frac{d!}{c_h c_k}  \cdot
\frac{\partial^2 G}{\partial x_a \partial x_b} (\x) \sum_{\mathbf{j}} G_{\mathbf{j}} \mathbbm{1}_{j_{d-1} = a } \mathbbm{1}_{j_{d} = b }
\end{eqnarray}
and
$$
\sum_{\mathbf{j}} G_{\mathbf{j}} \mathbbm{1}_{j_{d-1} = a } \mathbbm{1}_{j_{d} = b }
=
\frac{c_1 ! \cdots c_m !  }{d!}  \cdot \frac{(d-2)! c_h c_k  }{ c_1 ! \cdots c_m ! } = \frac{c_h c_k}{ d (d-1) }.
$$
Therefore, we obtain
$$
\Gamma_{G}(\x, \ldots, \x, \mathbf{e}_a, \mathbf{e}_b ) = \frac{d!}{d (d-1)}
\cdot \frac{\partial^2 G}{\partial x_a \partial x_b} (\x).
$$

Next let us suppose $a = b$. The proof is similar to the above case, but we include it for completeness.
We use the same notation as above. Let us assume $c_h > 1$, as the result is trivial otherwise.
Then we have
\begin{eqnarray}
\notag
\Gamma_G(\x, \ldots, \x, \mathbf{e}_a, \mathbf{e}_a )
&=&
d! \sum_{\mathbf{j}} G_{\mathbf{j}} x_{j_1} \cdots x_{j_{d-2}} \mathbbm{1}_{j_{d-1} = a } \mathbbm{1}_{j_{d} = a }
\\
\notag
&=&
d! \frac{x_{\ell_1}^{c_1} \cdots x_{\ell_{m}}^{c_m}}{x_a^2} \sum_{\mathbf{j}} G_{\mathbf{j}} \mathbbm{1}_{j_{d-1} = a } \mathbbm{1}_{j_{d} = a }
\notag
\\
\notag
&=&
\frac{d!}{c_h (c_h - 1)}  \cdot
\frac{\partial^2 G}{\partial x_a^2} (\x) \sum_{\mathbf{j}} G_{\mathbf{j}} \mathbbm{1}_{j_{d-1} = a } \mathbbm{1}_{j_{d} = a }
\end{eqnarray}
and
$$
\sum_{\mathbf{j}} G_{\mathbf{j}} \mathbbm{1}_{j_{d-1} = a } \mathbbm{1}_{j_{d} = a }
=
\frac{c_1 ! \cdots c_m !  }{d!}  \cdot \frac{(d-2)! c_h (c_h - 1)  }{ c_1 ! \cdots c_m ! } = \frac{c_h (c_h - 1)}{ d (d-1) }.
$$
Therefore, we obtain
$$
\Gamma_G(\x, \ldots, \x, \mathbf{e}_a, \mathbf{e}_a ) = \frac{d!}{d (d-1)}
\cdot \frac{\partial^2 G}{\partial x_a^2} (\x).
$$
\end{proof}

We need the following estimate, which is \cite[(3.1)]{Bro}.
\begin{lem}
\label{lin alg}
Let $T \subseteq \mathbb{A}^{\nu}_{\mathbb{C}}$ be an affine variety (not necessarily irreducible). Then
$$
\# \{ \t \in T \cap \ZZ^{\nu} :  |\t| \leq B  \}
\ll B^{\dim T},
$$
where the implicit constant depends only on $\nu$ and $\deg T$.
\end{lem}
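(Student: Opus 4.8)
The plan is to prove the estimate by induction on $m = \dim T$, slicing $T$ by integer translates of a single coordinate hyperplane. Since the implicit constant is allowed to depend on $\nu$ and $\deg T$, I would first reduce to the case of an irreducible variety: an affine variety $T \subseteq \mathbb{A}^\nu_\CC$ has at most $\deg T$ irreducible components, each of dimension at most $\dim T$ and of degree at most $\deg T$. Bounding the integer points on each component separately and summing therefore loses only a factor depending on $\deg T$, so it suffices to treat an irreducible $T$ of dimension $m$ and degree $D \leq \deg T$. As the base case $m = 0$, an irreducible affine variety of dimension $0$ is a single point, contributing $O(1) = O(B^0)$.

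For the inductive step I would take $m \geq 1$. Because $T$ is irreducible of positive dimension, it is not contained in any hyperplane of the form $\{x_i = k\}$, so some coordinate function $x_i$ is non-constant on $T$. For each integer $k$ with $|k| \leq B$, consider the slice $T_k = T \cap \{ \t \in \mathbb{A}^\nu_\CC : t_i = k \}$. Since $x_i$ is non-constant on the irreducible variety $T$, the morphism $x_i \colon T \to \mathbb{A}^1_\CC$ is dominant, and every slice $T_k$ has dimension at most $m - 1$: a fibre of dimension $m$ would contain a closed irreducible $m$-dimensional subvariety of the irreducible $m$-dimensional $T$, which would have to equal $T$ and force $T \subseteq \{x_i = k\}$, a contradiction. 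Every integer point $\t \in T$ with $|\t| \leq B$ lies in $T_k$ for $k = t_i$, and there are at most $2B + 1$ admissible values of $k$. Applying the inductive hypothesis to each slice gives $\ll B^{m-1}$ points per slice, and summing over $k$ yields $\ll B \cdot B^{m-1} = B^m$, as required.

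The point requiring the most care is keeping the implicit constant uniform through the induction, which reduces to bounding $\deg T_k$ independently of $k$ and $B$. This is the standard geometric fact that a hyperplane section does not increase the degree: for a hyperplane $H = \{x_i = k\}$ not containing the irreducible $T$ one has $\deg(T \cap H) \leq \deg T = D$, viewing the degree of a possibly reducible variety as the sum of the degrees of its components (equivalently, via B\'ezout, as the number of points cut out by a generic complementary linear space). With this uniform degree bound, the inductive constant depends only on $\nu$ and $D \leq \deg T$ and does not accumulate across the $O(B)$ slices beyond the single factor of $B$ already accounted for. I would invoke this degree bound as the sole external geometric input; everything else is the elementary slicing count above.
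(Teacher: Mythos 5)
Your proof is correct, but there is nothing in the paper to compare it against: the paper does not prove this lemma, quoting it verbatim as \cite[(3.1)]{Bro}. Your slicing induction is in fact the standard argument by which that cited estimate is established, so in spirit you have reconstructed the proof behind the citation. The one genuinely delicate point is the uniformity of the implicit constant through the induction, and you handled it correctly: with the degree of a possibly reducible variety taken to be the sum of the degrees of its components, the number of components is at most $\deg T$, and the refined B\'ezout inequality gives $\deg(T \cap H) \leq \deg T$ for any hyperplane $H$ not containing the irreducible $T$, so the constant per slice depends only on $\nu$ and $\deg T$ and does not accumulate over the $O(B)$ slices. One sentence is misstated, though harmlessly: an irreducible $T$ of positive dimension can certainly be contained in a coordinate hyperplane $\{x_i = k\}$ (e.g.\ $T = \{x_1 = 0\} \subseteq \mathbb{A}^2_{\CC}$), so your claim that it is not contained in any such hyperplane is false as written. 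What is true, and all your argument needs, is that not every coordinate function can be constant on $T$ (otherwise $T$ would be a single point, contradicting $\dim T \geq 1$); choosing an index $i$ with $x_i$ non-constant on $T$, your fibre-dimension argument for that particular $i$ goes through unchanged, so the slip does not affect the validity of the proof.
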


With these two lemmata in hand, the case $d=2$ of Lemma \ref{main lem} follows easily:
\begin{eqnarray}
\notag
&&\# \{ \x_1 \in ([-B, B] \cap \ZZ )^n:  \Gamma_G (\x_1, \mathbf{e}_b )  = 0  ~  (1 \leq b \leq n)   \}
\\
\notag
&=& \# \{ \y \in ([-B, B] \cap \ZZ )^n:  y_1 \Gamma_G (\mathbf{e}_1 , \mathbf{e}_b )  + \cdots +  y_n  \Gamma_G (\mathbf{e}_n , \mathbf{e}_b )  = 0  ~  (1 \leq b \leq n)   \}
\\
\notag
&=& \# \{  \mathbf{y}  \in ([-B,B] \cap \ZZ)^n :  \mathbf{y}^T \cdot  [\Gamma_G (\mathbf{e}_a, \mathbf{e}_b )]_{ \substack{1 \leq a, b \leq n}}   = \mathbf{0}
 \}
\\
&=& \# \{  \mathbf{y}  \in ([-B,B] \cap \ZZ)^n :   H_G^T \cdot \mathbf{y} = \mathbf{0} \}
\notag
\\
\notag
&\ll& B^{n - \textnormal{rank }  H_{G}}
\\
\notag
&=& B^{(d-2)n + \mathcal{H}_{G}}.
\end{eqnarray}
Here the implicit constant depends only on $n$. 
Therefore, we assume $d > 2$ for the remainder of this section.
By lemma \ref{seconf der} it follows that
\begin{eqnarray}
\notag
&& Z_+(r) \tsp \bigcap \bigcap_{ 2 \leq k \leq d-2 }   \{  \x_1, \ldots, \x_{d-2} \in \mathbb{A}_{\mathbb{C}}^n:  \x_{k} =  \x_{1}  \}
\\
\notag
&=& \{  \x_1 \in \mathbb{A}_{\mathbb{C}}^n:   \textnormal{rank } [\Gamma_{G} (\x_1, \ldots, \x_{1}, \mathbf{e}_a, \mathbf{e}_b )]_{ \substack{1 \leq a, b \leq n} }  \leq  r  \}
\\
\notag
&=&
\{  \x \in \mathbb{A}_{\mathbb{C}}^n:   \textnormal{rank }  H_{G}(\x) \leq  r \}.
\end{eqnarray}
Therefore, we obtain
\begin{equation}
\notag
\dim \{  \x \in \mathbb{A}_{\mathbb{C}}^n:   \textnormal{rank } H_{G}(\x) \leq  r \}
\geq
\dim Z_+(r) + n  - (d-2)n,
\end{equation}
i.e.
\begin{eqnarray}
\label{dim}
\dim Z_+(r)  &\leq& (d-3)n +  \dim \{  \x \in \mathbb{A}_{\mathbb{C}}^n:   \textnormal{rank } H_{G}(\x) \leq  r  \}
\\
\notag
&\leq&
(d-3)n +  r + \mathcal{H}_{G},
\end{eqnarray}
where the second inequality follows from the definition of $\mathcal{H}_{G}$ given in (\ref{def}).

We are now in position to complete the proof of Lemma \ref{main lem}. First, by the multilinearity of $\Gamma_G$ and Lemma \ref{lin alg}, we observe that
\begin{eqnarray}
\label{multi lin eqn'}
&&\# \{ \x_1, \ldots, \x_{d-1} \in ([-B, B] \cap \ZZ )^n:  \Gamma_{G} (\x_1, \ldots, \x_{d-1}, \mathbf{e}_i )  = 0  ~  (1 \leq i \leq n)   \}
\\
\notag
&=&
\sum_{  \x_1, \ldots, \x_{d-2} \in ([-B, B] \cap \ZZ)^n }
\\
\notag
&& \# \{  \mathbf{y}  \in ([-B,B] \cap \ZZ)^n :   \mathbf{y}^T \cdot   [\Gamma_{G} (\x_1, \ldots, \x_{d-2}, \mathbf{e}_a, \mathbf{e}_b )]_{ \substack{1 \leq a, b \leq n}}  = \mathbf{0}
 \}
\\
&=&
\sum_{0 \leq r \leq n} \  \sum_{  (\x_1, \ldots, \x_{d-2}) \in Z(r) \cap ([-B,B] \cap \ZZ)^{(d-2)n} }
\notag
\\
&& \# \{  \mathbf{y}  \in ([-B,B] \cap \ZZ)^n :    \mathbf{y}^T \cdot   [\Gamma_{G} (\x_1, \ldots, \x_{d-2}, \mathbf{e}_a, \mathbf{e}_b )]_{ \substack{1 \leq a, b \leq n}}  = \mathbf{0}
\}
\notag
\\
\notag
&\ll&  \sum_{0 \leq r \leq n} \  \sum_{  (\x_1, \ldots, \x_{d-2}) \in Z(r) \cap  ([-B,B] \cap \ZZ)^{(d-2)n} } B^{n-r},
\end{eqnarray}
where the implicit constant depends only on $n$.
Next, by Lemma \ref{lin alg} and (\ref{dim}), we have
\begin{eqnarray}
\notag
\#  Z(r) \cap  ([-B,B] \cap \ZZ)^{(d-2)n}
\leq
\#  Z_+(r) \cap  ([-B,B] \cap \ZZ)^{(d-2)n}
\ll
B^{\dim Z_+(r)} \leq B^{(d-3)n +  r + \mathcal{H}_G},
\end{eqnarray}
where the implicit constant depends only on $d, n$ and $r$.
By substituting this estimate into (\ref{multi lin eqn'}), we obtain
\begin{eqnarray}
\notag
&&\# \{ \x_1, \ldots, \x_{d-1} \in ([-B, B] \cap \ZZ )^n:  \Gamma_{G} (\x_1, \ldots, \x_{d-1}, \mathbf{e}_i )  = 0  ~  (1 \leq i \leq n)   \}
\\
\notag
&\ll&  \sum_{0 \leq r \leq n}  B^{(d-3)n +  r + \mathcal{H}_{G}}  B^{n-r}
\\
&=&
\notag
n B^{(d-2)n + \mathcal{H}_{G}},
\end{eqnarray}
as desired.

\section{Examples}
\label{EXAMP}
\begin{example}
\label{example1}
Let $n = 2m + k$, $d > 1$ and
$$
F(\x) =   \sum_{j = 1}^m a_j x_{2j - 1}^{d} x_{2j}^{d} + \sum_{i = 1}^{k} b_i x_{2m + i}^{2d},
$$
where $a_j, b_i \in \ZZ \setminus \{ 0 \}$.
Then
\begin{eqnarray}
V_F^* =
\bigcap_{1 \leq j \leq m}
\{ \x \in \mathbb{A}^{n}_{\CC}: x_{2j - 1} x_{2j} = 0 \} \  \bigcap \tsp   \bigcap_{1 \leq i \leq k} \{ \x \in \mathbb{A}^{n}_{\CC}: x_{2m + i} = 0 \}
\notag
\end{eqnarray}
and
$$
\dim V_F^* = m.
$$
On the other hand, $H_F$ is a block diagonal matrix
$$
H_F(\x) =
\begin{bmatrix}
 a_1 d B_1 & \cdots  & [0]   & [0] \\
  \vdots  & \ddots  & \vdots &  \vdots  \\
  [0] & \cdots  &  a_m d B_m  & [0] \\
  [0] &  \cdots & [0] &  2 d (2d-1) D
\end{bmatrix},
$$
where
$$
B_j = \begin{bmatrix}
  (d-1)  x_{2j - 1}^{d - 2}  x_{2j}^{d} & d x_{2j-1}^{d-1}  x_{2j}^{d-1}   \\
  d x_{2j-1}^{d-1}  x_{2j}^{d-1} & (d-1)  x_{2j - 1}^{d}  x_{2j}^{d - 2}
\end{bmatrix}
 ~
\textnormal{ and }
 ~
D = \begin{bmatrix}
   b_1 x_{2m + 1}^{2 d-2}  & \cdots  & 0  \\
  \vdots  & \ddots  & \vdots   \\
  0 & \cdots  &   b_k x_{2m + k}^{2 d -2}
\end{bmatrix}.
$$
It is clear that
$$
\dim \{ \x \in \mathbb{A}^n_{\CC}:  \textnormal{rank }  H_{F}(\x)  \leq n  \} \leq n
$$
and
$$
\dim \{ \x \in \mathbb{A}^n_{\CC}:  \textnormal{rank }  H_{F}(\x)  \leq n-1  \} \leq n-1.
$$
For each $0 \leq  r  \leq n - 2$, we have
$$
\{ \x \in \mathbb{A}^n_{\CC}:  \textnormal{rank }  H_{F}(\x)  \leq r  \} \subseteq \bigcup_{ 1 \leq i_1 < \cdots < i_{n - r} \leq n } \{  \x \in \mathbb{A}^n_{\CC}:    x_{i_1} = \cdots =  x_{i_{n -r} } = 0   \}
$$
and
$$
\dim \{ \x \in \mathbb{A}^n_{\CC}:  \textnormal{rank }  H_{F}(\x)  \leq r  \} \leq r.
$$
Therefore, we obtain $\mathcal{H}_F = 0$ in this case.
\end{example}

\begin{example}
\label{example2}
Let $n = 2m$, $d > 3$ and
$$
F(\x) =   \sum_{j = 1}^m a_j \left( x_{2j - 1} x_{2j}^{d - 1} -  \frac{1}{d} x_{2j - 1}^d \right),
$$
where $a_j \in \ZZ \setminus \{ 0 \}$.
Then
\begin{eqnarray}
V_F^* =
\bigcap_{1 \leq j \leq m}
\left(
\{ \x \in \mathbb{A}^{n}_{\CC}: x^{d-1}_{2j} -  x_{2j - 1}^{d-1} = 0 \}
\tsp  \bigcup \tsp
\{ \x \in \mathbb{A}^{n}_{\CC}: x_{2j - 1} x_{2j} = 0 \}
\right)
\notag
\end{eqnarray}
and
$$
\dim V_F^* = m.
$$
On the other hand, $H_F$ is a block diagonal matrix
$$
H_F(\x) =
\begin{bmatrix}
 a_1 (d-1) B_1 & \cdots  & [0]   \\
  \vdots  & \ddots  & \vdots   \\
  [0] & \cdots  &  a_m (d-1) B_m
\end{bmatrix},
$$
where
$$
B_j = \begin{bmatrix}
   - x_{2j - 1}^{d-2}   &  x_{2j}^{d - 2}  \\
    x_{2j}^{d - 2} &  (d-2) x_{2j - 1} x_{2j}^{d - 3}
\end{bmatrix}.
$$
Since
$$
\textnormal{rank } B_j =
\begin{cases}
  1 & \mbox{if precisely one of }  (d-2) x_{2j-1}^{d-1} + x_{2j}^{d-1} = 0 \mbox{ or } x_{2j} = 0 \mbox{ holds}, \\
  0 & \mbox{if } x_{2j -1} = x_{2j} = 0,
\end{cases}
$$
it can be verified that
$$
\dim \{ \x \in \mathbb{A}^n_{\CC}:  \textnormal{rank }  H_{F}(\x)  \leq r  \} \leq r,
$$
for each $0 \leq  r  \leq n$.
Therefore, we obtain $\mathcal{H}_F = 0$ in this case as well.

\end{example}

\end{document}